\newcommand{\Hom}{\mathrm{Hom}}
\newcommand{\End}{\mathrm{End}}
\newcommand\co{\mathrm{co}}
\newcommand{\m}{\mathfrak{m}}
\newcommand{\CM}{\mathrm{CM}}
\newcommand{\add}{\mathrm{add}}
\newcommand{\reflexive}{\mathrm{ref}}
\newtheorem{theorem}{Theorem}[section]
\newtheorem*{theorem*}{Theorem}
\newtheorem{lemma}[theorem]{Lemma}
\newtheorem*{lemma*}{Lemma}
\newtheorem{proposition}[theorem]{Proposition}
\newtheorem*{proposition*}{Proposition}
\newtheorem{corollary}[theorem]{Corollary}
\newtheorem*{corollary*}{Corollary}
\theoremstyle{definition}
\newtheorem{definition}[theorem]{Definition}
\newtheorem*{definition*}{Definition}
\newtheorem{example}[theorem]{Example}
\newtheorem{remark}[theorem]{Remark}
\DeclareMathOperator{\tr}{\mathrm{tr}}
\newcommand\oznewtext[1]{{\color{olive} #1}} 
\title{When is the canonical conductor minimal?}
\author[\"{O}.~Esentepe]{\"{O}zg\"{u}r Esentepe}
\address{Institut für Mathematik und Wissenschaftliches Rechnen, Universität Graz,
Heinrichstraße 36, 8010 Graz, Austria}
\email{ozgur.esentepe@uni-graz.at}
\urladdr{https://www.sntp.ca}
\subjclass[2020]{13C05, 13C14, 13G05, 13H10}
\keywords{almost Gorenstein rings, nearly Gorenstein rings, far-flung Gorenstein rings, trace ideals, conductor ideals}
\begin{document}

\begin{abstract}
   For a one dimensional analytically unramified Cohen-Macaulay local ring $R$, the blowup algebra of the canonical ideal is a module finite birational extension. The conductor of this extension always contains the conductor of $R$. We study the case when there is equality. This is the case where $R$ is far from being almost Gorenstein. We study this property within the landscape of numerical semigroup rings and local Arf rings.
\end{abstract}

\maketitle
\thispagestyle{empty}

\section{Introduction}
Gorenstein rings are ubiquitous as Hyman Bass pointed out over half a century ago \cite{Bass}. They possess beautiful symmetry properties and they are a special family inside the broader family of Cohen-Macaulay rings. In the last few decades, there has been a growing interest in understanding Cohen-Macaulay local rings which are close to being Gorenstein. This has started with the introduction of one dimensional \textit{almost Gorenstein} rings by Barucci-Fr{\"o}berg \cite{Barucci-Froberg} and continued with the work of Goto-Matsuoka-Phuong \cite{Goto-Matsuoka-Phuong} and Goto-Takahashi-Taniguchi \cite{Goto-Takahashi-Taniguchi} in higher dimensions. Later in \cite{Herzog-Hibi-Stamate}, Herzog-Hibi-Stamate introduced \textit{nearly Gorenstein} rings and this is still a very active research are in commutative algebra. On the other extreme, Herzog-Kumashiro-Stamate \cite{Herzog-Kumashiro-Stamate} considered \textit{far-flung-Gorenstein} rings which are one dimensional rings that are not close to being Gorenstein.

The main actors in this type of study are trace ideals. Nearly Gorenstein rings and far-flung Gorenstein rings are defined by imposing conditions on the trace of the canonical module. Dao-Maitra-Sridhar showed in \cite[Proposition 7.1]{Dao-Maitra-Sridhar} that in dimension one, the almost Gorenstein property can also be seen by a condition on the trace of the blowup algebra of the canonical module. More precisely, let $R$ be a one dimensional generically Gorenstein local ring with canonical ideal $\omega$ and maximal ideal $\m$. Denote by $\tr(\omega)$ the trace of the canonical module (see Definition \ref{trace-and-conductor}) and by $b(\omega)$ the conductor of the blowup algebra $B(\omega)$ of the canonical module (see Remark \ref{trace-conductor-remark} and Remark \ref{blow-up-remarks}). Then, under mild conditions there is a chain of ideals 
\begin{align*}
\co(R) \subseteq b(\omega) \subseteq \tr(\omega) \subseteq \m
\end{align*}
where $\co(R)$ is the conductor of $R$, provided that $R$ is not Gorenstein (in which case we have $b(\omega) = \tr(\omega) = R$ ). The three extreme cases have been studied:
\begin{enumerate}
    \item $R$ is a nearly Gorenstein ring if and only if the equality $\tr(\omega) = \m$ holds.
    \item $R$ is an almost Gorenstein ring if and only if the equality $b(\omega) = \m$ holds.
    \item $R$ is a far-flung Gorenstein ring if and only if the equality $\tr(\omega) = \co(R)$ holds.
\end{enumerate}
In this paper, we are treating the missing case: the case where the equality $b(\omega) = \co(R)$ holds. We say that such a ring has \textit{minimal canonical conductor}. By definition, the class of such rings contain the class of far-flung Gorenstein rings. Under some conditions, they coincide. For example, if $R$ has multiplicity 3, then $R$ has minimal canonical conductor if and only if it is far-flung Gorenstein. But in general the class of rings with minimal canonical conductor is much larger.

In Section 2, we recall some preliminaries and equivalent conditions which we use to define a ring with minimal canonical conductor. In Section 3, we compare this property with being far-flung Gorenstein and also study this property for blowups at the maximal ideal.

Section 4 compares having minimal canonical conductor with having reflexive and Gorenstein birational extensions. We then apply this to understand local Arf domains which have minimal canonical conductor.

Finally, in Section 5 we classify numerical semigroup rings which have minimal canonical conductor and we give an example which has minimal canonical conductor but is nearly Gorenstein.

\subsection*{Acknowledgements} I would like to thank Alfred Geroldinger, Alessio Moscariello and Shinya Kumashiro for valuable conversations and helpful comments. I also want to thank Souvik Dey for his comments after the first preprint version of this paper.

\section{Preliminaries}
We start this section with some conventions and general definitions. 
\begin{definition}\label{trace-and-conductor}
    Let $R$ be a commutative Noetherian local ring .
     \begin{enumerate}
        \item Let $M$ be a finitely generated $R$-module. Then, the \textit{trace ideal} $\tr(M)$ of $M$ is the ideal of $R$ generated by the elements of the form $f(m)$ where $f \in M^*$ and $m \in M$. 
        \item Let $Q(R)$ be the total quotient ring of $R$. By $\overline{R}$, we denote the normalisation of $R$ inside $Q(R)$. That, is $\overline{R}$ is the integral closure of $R$ inside its total quotient rings. Then, the \textit{conductor} ideal is defined as 
        \begin{align*}
            \co(R) = \{r \in \overline{R} \colon r \overline{R} \subseteq R \}.
        \end{align*}
        More generally, for any birational extension $R \subseteq S$, one can define 
        \begin{align*}
            \co(R,S) = \{r \in S \colon r S \subseteq R \}.
        \end{align*}
        Here, by a \textit{birational extension} we mean a ring extension $R \subseteq S \subseteq Q(R)$.
     \end{enumerate}
\end{definition}
\begin{remark}\label{trace-conductor-remark}
    Conductor ideals can be seen as trace ideals. This follows from \cite[Proposition 2.4]{Kobayashi-Takahashi-Trace} and has been carefully treated in \cite[Appendix]{Herzog-Hibi-Stamate-semigroup}. More precisely, one has $\co(R,S) = \tr(S)$. 
\end{remark}
Recall that a local ring is called \textit{analytically unramified} if its completion is reduced. For an analytically unramified local ring, the normalisation $\overline{R}$ is finitely generated as an $R$-module. The converse is also true when $R$ is a one dimensional Cohen-Macaulay local ring. In this case, $R$ is reduced. 
\begin{remark}
    Let $R$ be a one dimensional Cohen-Macaulay local ring which is analytically unramified. Assume that for a finitely generated $R$-module M, the trace ideal $\tr(M)$ contains a nonzerodivisor and has a principal reduction. Then, there exists an inclusion $\co(R) \subseteq \tr(M)$ \cite[Corollary 3.6]{Dao-Maitra-Sridhar}. When $R$ has infinite residue field, the principal reduction condition is satisfied automatically.
\end{remark}
\begin{definition}
    Let $R$ be a commutative Noetherian ring and $I$ a regular ideal. Then, there is a filtration of endomorphism algebras 
    \begin{align*}
    R \subseteq I \colon I \subseteq I^2 \colon I^2 \subseteq \cdots I^n \colon I^n \subseteq \cdots \subseteq Q(A).
    \end{align*}
    We put 
    \begin{align*}
    R^I = \bigcup_{n \geq 0} I^n \colon I^n
    \end{align*}        
    and denote by $b(I)$ the conductor ideal $\co(R,R^I)$.
\end{definition}
\begin{remark}\label{blow-up-remarks}
    Assume that $R$ is a commutative Noetherian ring of dimension one and $I$ a regular ideal.
    \begin{enumerate}
        \item The ring $R^I$ coincides with the blowup algebra $B(I)$ at $I$. This was proved by Lipman in his influencial work \cite{Lipman} on Arf rings.
        \item We say that $I$ is a \textit{stable} ideal if $R^I = I \colon I$.
        \item The ideal $b(I)$ equals the trace ideal $\tr_R(R^I)$.
    \end{enumerate}
\end{remark}
Next, we will talk about $I$-Ulrich modules introduced by Dao-Maitra-Sridhar \cite{Dao-Maitra-Sridhar}. The following definition is an equivalent condition for being an $I$-Ulrich module \cite[Theorem 4.6]{Dao-Maitra-Sridhar}. 
\begin{definition}
    Let $R$ be a one dimensional Cohen-Macaulay local ring and $I$ a regular ideal. Then a maximal Cohen-Macaulay $R$-module is called \textit{$I$-Ulrich} if there is an isomorphism $IM \cong M$. We denote by $\mathrm{Ul}_I(R)$ the category of $I$-Ulrich modules.
\end{definition}
\begin{remark}\label{i-ulrich-modules}
    Let $R$ be a one dimensional Cohen-Macaulay local ring and $I$ be a regular ideal. 
    \begin{enumerate}
        \item A module $M$ is $I$-Ulrich if and only if $M \in \CM(B(I))$ \cite[Theorem 4.6]{Dao-Maitra-Sridhar}. This has the consequence that if $M$ is an $I$-Ulrich module, then one has an inclusion $\tr(M) \subseteq b(I)$ and the converse holds when $M$ is reflexive.
        \item If $M$ is an $I$-Ulrich module, then for any $X \in \CM(R)$, the module $\Hom_R(M,X)$ is also an $I$-Ulrich module \cite[Lemma 4.15]{Dao-Maitra-Sridhar}.
        \item If $R$ is analytically unramified, then the normalisation $\overline{R}$ is $I$-Ulrich.
    \end{enumerate}
\end{remark}
Recall that a Cohen-Macaulay local ring $R$ is generically Gorenstein if for every minimal prime $p$ of $R$ the local ring $R_p$ is Gorenstein. In this case, the ring $R$ admits a canonical module $\omega$. In fact, the generically Gorenstein condition is equivalent to saying that the canonical module $\omega$ is isomorphic to an ideal. When we take $\omega$ to be an ideal we will refer to it as a canonical ideal. 
\begin{remark}\label{omega-Ulrich-remarks}
    Let $R$ be a one dimensional generically Gorenstein local ring with canonical ideal $\omega$.
    \begin{enumerate}
        \item There is an equality $\reflexive(R) = \Omega \CM(R)$. That is, an $R$-module $M$ is reflexive if and only if it is the syzygy of a maximal Cohen-Macaulay module.
        \item An $R$-module $M$ is $\omega$-Ulrich if and only if there is an isomorphism $M^* \cong DM$ \cite[Corollary 4.27]{Dao-Maitra-Sridhar}.
        \item For $n \gg 0$, the ideal $\omega^n$ is $\omega$-Ulrich. 
    \end{enumerate}
\end{remark}
The following theorem follows from \cite[Theorem 5.5]{Dao-Dey-Dutta}. We note that by the \textit{additive closure} $\add_R(X)$ of an $R$-module $X$, we mean the smallest subcategory that contains $X$ and is closed under direct summands, isomorphisms and finite direct sums.  
\begin{theorem}\label{main-theorem}
     Let $R$ be a one dimensional Cohen-Macaulay local ring which is analytically unramified. Let $\omega$ be a canonical ideal. Then, the following are equivalent.
    \begin{enumerate}
        \item The blowup algebra $B(\omega)$ is the normalisation $\overline{R}$.
        \item There is an equality $b(\omega) = \co(R)$.
        \item There exists an $n$ such that $\omega^n \cong \co(R)$.
    \end{enumerate}
    These equivalent conditions imply the following.
    \begin{enumerate}
        \item[(4)] There is an equality $\mathrm{Ul}_{\omega}(R) = \add_R(\overline{R})$. 
    \end{enumerate}
    When $R$ is a Henselian domain, all four conditions are equivalent.
\end{theorem}
We end this section with our definition and an example.
\begin{definition}
    Let $R$ be a one dimensional Cohen-Macaulay local ring with canonical ideal $\omega$. Assume that $R$ is analytically unramified and is not regular. We say that $R$ has \textit{minimal canonical conductor} if the equivalent conditions of Theorem \ref{main-theorem} hold for $R$. 
\end{definition}
\begin{example}\label{one-step-normal-example}
    Let $n \geq 3$ be a positive integer and $R = k[[t^n, t^{n+1}, \ldots, t^{2n-1}]]$. Then, by direct computation we can see that $\co(R)=\m$ and therefore one has equalities 
    \begin{align*}
        \co(R) = b(\omega) = \tr(\omega) = \m.
    \end{align*}
    Therefore, $R$ has minimal canonical conductor. Rings with maximal conductor have been considered by several authors in the last decade. For instance, they appear in Faber's work \cite{Faber} as \textit{one-step normal} rings. In the work of Dao-Dey-Dutta \cite{Dao-Dey-Dutta}, such rings are exactly one dimensional \textit{Ulrich split} rings. In the language of Herzog-Kumashiro-Stamate \cite{Herzog-Kumashiro-Stamate}, they are \textit{far-flung Gorenstein and nearly Gorenstein}. Recently, such rings also appeared in the work of Geroldinger-Yan-Zhang as \textit{conductor domains} \cite{Geroldinger-Yan-Zhong}.
\end{example}

    \section{Minimal canonical conductor versus far-flung Gorenstein}
    In Example \ref{one-step-normal-example}, we have seen that the canonical conductor $b(\omega)$ equals the canonical trace ideal $\tr(\omega)$. As we always have the inclusions $\co(R) \subseteq b(\omega) \subseteq \tr(\omega)$ for a one dimensional analytically unramified local ring, the condition $\tr(\omega) = \co(R)$ implies the condition $b(\omega) = \co(R)$. Hence, far-flung Gorenstein rings have minimal canonical conductor. However, not all rings with minimal canonical conductor are far-flung Gorenstein. We will see examples in the next section. in this section, we will look at the case when the two classes agree. We start with a definition due to Kumashiro \cite{Kumashiro}.
\begin{definition}
    Let $R$ be a generically Gorenstein Cohen-Macaulay local ring. The infimum of all nonnegative integers $n$ such that there exists a canonical ideal $\omega$ and an almost reduction $(a)$ of $\omega$ with $\omega^{n+1} = a\omega^n$ is called the \textit{canonical reduction number} of $R$ and is denoted by $\mathrm{can.red}(R)$.   
\end{definition}
As mentioned above, a one dimensional analytically unramified Cohen-Macaulay local ring is reduced and therefore it is generically Gorenstein.
\begin{proposition}
    Let $R$ be a one dimensional Cohen-Macaulay local ring which is analytically unramified. The following are equivalent.
    \begin{enumerate}
        \item $R$ has minimal canonical conductor and canonical reduction number 2.
        \item $R$ is far-flung Gorenstein.
    \end{enumerate} 
\end{proposition}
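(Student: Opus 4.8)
The plan is to fix a canonical ideal $\omega$ and rephrase both conditions in terms of the powers $\omega^n$; throughout I assume $R$ is singular (equivalently, not Gorenstein), which is implicit in both conditions. Because $\co(R)\subseteq b(\omega)\subseteq\tr(\omega)\subseteq\m$, the far‑flung Gorenstein condition $\tr(\omega)=\co(R)$ already forces $b(\omega)=\co(R)$, so in either direction I may assume $R$ has minimal canonical conductor; then $B(\omega)=\overline{R}$ by Theorem~\ref{main-theorem} and $R\subsetneq\overline{R}$. I would then introduce $n_0$, the least $n$ with $\omega^n\colon\omega^n=\overline{R}$: this is finite since the $\omega^n\colon\omega^n$ form an increasing chain of $R$-submodules of $\overline{R}$ with union $B(\omega)=\overline{R}$, and (the inclusion $\omega^n\colon\omega^n\subseteq\overline{R}$ being automatic) $\omega^n\colon\omega^n=\overline{R}$ is equivalent to $\omega^n$ being an $\overline{R}$-module. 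The proposition then reduces to two claims: (i) $\mathrm{can.red}(R)=n_0$ and $n_0\ge 2$; (ii) $R$ is far‑flung Gorenstein if and only if $\omega^2$ is an $\overline{R}$-module. Granting these, $\mathrm{can.red}(R)=2\iff n_0=2\iff\omega^2$ is an $\overline{R}$-module $\iff R$ is far‑flung Gorenstein.

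For (i): if $(a)$ is an almost reduction of a canonical ideal with $\omega^{m+1}=a\omega^m$, then $\omega^{m+k}=a^k\omega^m$ for all $k$, so $\omega^m\colon\omega^m=\tfrac1{a^m}\omega^m$ is eventually constant in $m$ and hence equals $B(\omega)=\overline{R}$, forcing $m\ge n_0$; since $n_0$, $B(\omega)$ and the equality $\omega^n\colon\omega^n=\overline R$ are unchanged if $\omega$ is replaced by another canonical ideal (these differ by a unit of $Q(R)$), this yields $\mathrm{can.red}(R)\ge n_0$. Conversely $\omega^{n_0}=c\,\overline{R}$ is a principal ideal of the semilocal Dedekind ring $\overline{R}$, and choosing $a$ with $a\overline{R}=\omega\overline{R}$ (an almost reduction of $\omega$) gives $\omega^{n_0+1}=\omega\cdot c\overline{R}=a\,\omega^{n_0}$, so $\mathrm{can.red}(R)\le n_0$. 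Finally $n_0\ge 2$: if $\omega$ were an $\overline{R}$-module then $\overline{R}\subseteq\omega\colon\omega=\End_R(\omega)=R$, contradicting $R\subsetneq\overline{R}$ (and $n_0=0$ would force $R$ itself to be an $\overline{R}$-module, the same contradiction).

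For (ii) I would write $\omega^{-1}=R\colon\omega$, so $\tr(\omega)=\omega\,\omega^{-1}$ and $\co(R)=R\colon\overline{R}$, and first verify the identities $R\colon\tr(\omega)=\End_R(\omega^{-1})$ (from $q\,\omega\,\omega^{-1}\subseteq R\iff q\,\omega^{-1}\subseteq R\colon\omega=\omega^{-1}$) and $R\colon\co(R)=\overline{R}$ (using that $\co(R)$ is a principal ideal of $\overline{R}$). These give: $R$ far‑flung Gorenstein $\Rightarrow\End_R(\omega^{-1})=R\colon\co(R)=\overline{R}\Rightarrow\omega^{-1}$ is an $\overline{R}$-module; and conversely, if $\omega^{-1}$ is an $\overline{R}$-module then $\tr(\omega)=\omega\,\omega^{-1}$ is an ideal of $R$ that is also an $\overline{R}$-module, so $\tr(\omega)\subseteq\co(R)$, whence $\tr(\omega)=\co(R)$ by the inclusion $\co(R)\subseteq\tr(\omega)$. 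Thus $R$ far‑flung Gorenstein $\iff\omega^{-1}$ is an $\overline{R}$-module. It remains to see $\omega^{-1}$ is an $\overline{R}$-module $\iff\omega^2$ is: using the duality $D=\Hom_R(-,\omega)$ on $\CM(R)$ one has $D(\omega^2)=\omega\colon\omega^2=R\colon\omega=\omega^{-1}$ (since $q\omega\subseteq R\iff q\omega^2\subseteq\omega$, via $\End_R(\omega)=R$) and $D(\omega^{-1})=\omega^2$ (both $\omega^2$ and $\omega^{-1}$ being maximal Cohen--Macaulay ideals, hence $D$-reflexive), while $D=\omega\colon(-)$ sends any $\overline{R}$-submodule $N$ of $Q(R)$ to another one because $qN\subseteq\omega$ and $u\in\overline{R}$ give $quN\subseteq qN\subseteq\omega$; applying this with $N=\omega^2$ and with $N=\omega^{-1}$ gives both implications.

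The hard part will be (i): upgrading $\mathrm{can.red}(R)\ge n_0$ to an equality, i.e.\ producing an almost reduction of $\omega$ attaining the bound — a generator of the $\overline{R}$-ideal $\omega\overline{R}$ — for which it may be convenient to first pass to a faithfully flat extension making the residue field infinite (this changes none of the invariants involved). The remainder is careful bookkeeping around the equivalence $\omega^n\colon\omega^n=\overline{R}\iff\omega^n$ is an $\overline{R}$-module, which is what simultaneously controls the canonical reduction number, the conductor $b(\omega)$, and --- through $\Hom_R(-,\omega)$-duality --- the trace $\tr(\omega)$.
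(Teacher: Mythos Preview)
Your argument is correct and genuinely different from the paper's. The paper's proof is essentially a concatenation of three citations: it invokes \cite[Proposition 4.7]{Dao-Lindo} to get $b(\omega)=\tr(\omega)\iff\tr(\omega)\cong\omega^*$, then \cite[Theorem 2.13]{Kumashiro} to get $\tr(\omega)\cong\omega^*\iff\mathrm{can.red}(R)\le 2$, and finally \cite[Theorem 2.5]{Herzog-Kumashiro-Stamate} for the implication that far-flung Gorenstein rings have canonical reduction number~$2$. With the chain $\co(R)\subseteq b(\omega)\subseteq\tr(\omega)$ this finishes both directions in two lines.

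Your route, by contrast, is self-contained: you identify $\mathrm{can.red}(R)$ with the stabilisation index $n_0$ of the tower $\omega^n\colon\omega^n$ (which under the minimal-canonical-conductor hypothesis terminates at $\overline{R}$), and you characterise far-flung Gorensteinness directly as ``$\omega^2$ is an $\overline{R}$-module'' via the canonical duality $D(\omega^2)=\omega^{-1}$ and the computation $R\colon\tr(\omega)=\End_R(\omega^{-1})$. This avoids the three external results entirely and exposes the mechanism: both conditions are governed by whether the second power of $\omega$ has already reached the normalisation. The cost is length and the need to produce an almost reduction attaining $n_0$, for which (as you note) one may have to enlarge the residue field; the paper sidesteps this by quoting Kumashiro's theorem, which has that argument built in. What your approach buys is that it actually proves, rather than imports, the equivalence $\mathrm{can.red}(R)\le 2\iff b(\omega)=\tr(\omega)$ in the present setting.
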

\begin{proof}
    The inclusion $b(\omega) \subseteq \tr(\omega)$ is an equality if and only if there exists an isomorphism $\tr(\omega) \cong \omega^*$ by \cite[Proposition 4.7]{Dao-Lindo}. And such an isomorphism holds if and only if $R$ has canonical reduction number at most 2 by \cite[Theorem 2.13]{Kumashiro}. This shows that if $R$ has minimal canonical conductor and canonical reduction number 2, then it is far-flung Gorenstein. On the other hand, as mentioned above all far-flung Gorenstein rings have minimal canonical conductor. Moreover, by \cite[Theorem 2.5]{Herzog-Kumashiro-Stamate}, we see that $R$ has canonical reduction number 2 when it is far-flung Gorenstein. This finishes the proof.
\end{proof}
We finish this section with an observation on the endomorphism ring $\m \colon \m$. This ring is often useful when one studies reflexive modules over $R$ and is far-flung Gorenstein whenever $R$ is. The latter assertion was proved by Herzog-Kumashiro-Stamate in \cite[Theorem 3.3]{Herzog-Kumashiro-Stamate} and the proof of the proposition below works similarly by making use of Claim 1 in the proof of \textit{op. cit}.
\begin{proposition}
    Let $(R, \m)$ be a one dimensional analytically unramified local Henselian domain. If $R$ has minimal canonical conductor, then the ring $B = \m \colon \m$ also has minimal canonical conductor. 
\end{proposition}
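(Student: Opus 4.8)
The plan is to verify condition~$(1)$ of Theorem~\ref{main-theorem} for the ring $B=\m\colon\m=\End_R(\m)$: writing $\omega_B$ for a canonical ideal of $B$, I will show that the blowup algebra $B^{\omega_B}=\bigcup_{n\ge0}\bigl(\omega_B^{n}\colon_{Q(R)}\omega_B^{n}\bigr)$ equals $\overline{B}$, which is precisely what it means for $B$ to have minimal canonical conductor. First the set-up. As $R$ is singular, $B\supsetneq R$, and $B$ is a module-finite birational extension of $R$; it is a one-dimensional Cohen--Macaulay local domain (local because $R\subseteq B\subseteq\overline{R}$ and $\overline{R}$ is local), it is analytically unramified (a subring of the reduced, module-finite extension $\overline{R}$), and $\overline{B}=\overline{R}$. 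In particular $B$ admits a canonical ideal, and since $B$ is a finite birational extension of $R$ we may take $\omega_B=\Hom_R(B,\omega)=\omega\colon_{Q(R)}B$. If $B=\overline{R}$ there is nothing to prove, so we assume $B$ is singular.

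One inclusion is automatic: $B^{\omega_B}\subseteq\overline{B}=\overline{R}$. For the reverse, the hypothesis that $R$ has minimal canonical conductor gives, via Theorem~\ref{main-theorem}, that $R^{\omega}=B(\omega)=\overline{R}$, equivalently that $\omega^{n}$ is an $\overline{R}$-module (so $\omega^{n}\cong\overline{R}$) for all $n\gg0$. Now $\omega_B=\omega\colon_{Q(R)}B$ fits into the chain
\begin{align*}
\omega\colon_{Q(R)}\overline{R}\ \subseteq\ \omega_B\ \subseteq\ \omega ,
\end{align*}
where $\omega\colon_{Q(R)}\overline{R}=\Hom_R(\overline{R},\omega)$ is a canonical ideal of $\overline{R}$, hence an $\overline{R}$-module isomorphic to $\overline{R}$; one also has $\m B=\m$. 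Using these facts, one should compare the powers $\omega_B^{n}$ with the powers $\omega^{n}$ closely enough to deduce that $\omega_B^{n}$ is an $\overline{R}$-module for $n\gg0$; then $\overline{R}\subseteq\omega_B^{n}\colon_{Q(R)}\omega_B^{n}\subseteq B^{\omega_B}$, which finishes the argument. This comparison of powers is the analogue, in our setting, of Claim~1 in the proof of \cite[Theorem~3.3]{Herzog-Kumashiro-Stamate}, and it is the step I expect to be the main obstacle.

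It is convenient to have at hand the conductor identity
\begin{align*}
\co(B,T)\ =\ \co(R,T)\colon_{Q(R)}\m\qquad\text{for every birational extension $T$ with $R\subsetneq B\subseteq T$.}
\end{align*}
The forward inclusion $\co(B,T)\subseteq\co(R,T)\colon_{Q(R)}\co(R,B)=\co(R,T)\colon_{Q(R)}\m$ is formal; here $\co(R,B)=\m$, because $\m B=\m$ forces $\m\subseteq\co(R,B)\subsetneq R$. For the reverse, if $x\m\subseteq\co(R,T)$ then, as $\co(R,T)$ is a $T$-submodule of $R$ --- in fact of $\m$, since $T\supsetneq R$ --- we get $x\m T\subseteq\co(R,T)\subseteq\m$, so $xT\subseteq\m\colon_{Q(R)}\m=B$ and $x\in B$, i.e.\ $x\in\co(B,T)$. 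Once $B^{\omega_B}=\overline{R}$ is known, this identity taken with $T=\overline{R}$ recovers the conclusion in the form of condition~$(2)$ of Theorem~\ref{main-theorem}: indeed $b(\omega_B)=\co(B,B^{\omega_B})=\co(B,\overline{R})=\co(B)$, and by the identity $\co(B)=\co(R)\colon_{Q(R)}\m$, which by the hypothesis $b(\omega)=\co(R)$ is the same as $b(\omega)\colon_{Q(R)}\m$. In either formulation, Theorem~\ref{main-theorem} then shows $B$ has minimal canonical conductor.
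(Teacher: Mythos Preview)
Your approach matches the paper's: both reduce to the analogue of Claim~1 in the proof of \cite[Theorem~3.3]{Herzog-Kumashiro-Stamate}, and the paper gives no argument beyond that reference. The step you flag as the main obstacle---showing that $\omega_B^{\,n}$ is an $\overline{R}$-module for $n\gg0$ by comparing with $\omega^{n}$---is precisely what the paper defers to Herzog--Kumashiro--Stamate, so your sketch (whose set-up and auxiliary conductor identity $\co(B,T)=\co(R,T)\colon_{Q(R)}\m$ are correct) is already at the same level of completeness as the paper's own argument.
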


\section{Reflexive and Gorenstein birational extensions}
Our definition of a ring with minimal canonical conductor via the chain of ideals 
\begin{align*}
\co(R) \subseteq b(\omega) \subseteq \tr(\omega) \subseteq \m
\end{align*}
suggests that such a ring should be considered as far away as possible from being almost Gorenstein. However, this claim uses a specific characterisation of one dimensional almost Gorenstein rings, namely \cite[Proposition 7.1]{Dao-Maitra-Sridhar}. However, One of the most well-known characterisations of a one dimensional almost Gorenstein local ring $(R,\m)$ is via the endomorphism ring $E=\m \colon \m \cong \End_R(\m)$. Precisely, in \cite[Theorem 5.1]{Goto-Matsuoka-Phuong}, Goto-Matsuoka-Phuong proves that the following are equivalent for a one dimensional Cohen-Macaulay local ring $(R,\m)$:
\begin{enumerate}
    \item $E$ is a Gorenstein ring.
    \item $R$ is an almost Gorenstein ring with minimal multiplicity.
\end{enumerate}
In this section, we will explore this point of view. We start with some important remarks.
\begin{remark}
    Let $(R,\m)$ be a one dimensional Cohen-Macaulay local ring which is generically Gorenstein and put $E = \End_R(\m)$ as above. Then, both $\m$ and $E$ are reflexive $R$-modules, see for instance \cite[Lemma 2.5]{Kobayashi}. Assume that $R$ is not regular. Then, by \cite[Lemma 2.2]{Kobayashi-self-dual}, for any module-finite reflexive birational extension $S$ of $R$, one has a chain of inclusions 
    \begin{align*}
    R \subsetneq E \subseteq S.
    \end{align*}
    That is, $E$ is the minimal module-finite reflexive birational extension of $R$, in a sense. Combining this with the fact that any module-finite birational extension of a Gorenstein ring is also Gorenstein, we may rephrase the theorem of Goto-Matsuoka-Phuong as follows: for a one dimensional Cohen-Macaulay local ring $R$, the following are equivalent.
    \begin{enumerate}
        \item Any module-finite reflexive birational extension of $R$ is Gorenstein.
        \item $R$ is an almost Gorenstein ring with minimal multiplicity.
    \end{enumerate}      
\end{remark}
\begin{remark}
    Let $(R,\m)$ be a one dimensional Cohen-Macaulay local ring with infinite residue field and assume that it is not regular. In this case, $\m$ is not a principal ideal and therefore $\m$ is a regular proper trace ideal, see for instance \cite[Proposition 2.8]{Herzog-Rahimbeigi}. By \cite[Proposition 2.5]{Ooishi}, we have that $R$ has minimal multiplicity if and only if there exists an isomorphism $\m \cong \m^{-1} \cong \m^*$. And by \cite[Proposition 3.3]{Dao-Lindo}, the latter isomorphism is equivalent to saying that $\m$ is a stable trace ideal. Finally, let $\omega$ be the canonical module of $R$ and assume that $R$ is not Gorenstein so that $\omega$ is not free. If $R$ is almost Gorenstein, then by \cite[Theorem 7.1]{Dao-Maitra-Sridhar}, we have $b(\omega) = \m$. Combining all of these, we reach at the following paraphrase of the Goto-Matsuoka-Phuong theorem. The following are equivalent for a one dimensional Cohen-Macaulay local ring $R$ with canonical module $\omega$:
    \begin{enumerate}
        \item Any module-finite reflexive birational extension of $R$ is Gorenstein.
        \item $R$ is an almost Gorenstein ring and $b(\omega)$ is stable.
    \end{enumerate}
\end{remark}
Let $R$ be a one dimensional analytically unramified local ring. Then, the normalisation $\overline{R}$ of $R$ is a reflexive birational extension which is Gorenstein. Our main theorem in this section proves that this is the only such birational extension when $R$ has minimal canonical conductor.
\begin{theorem}\label{new-main-theorem}
     Let $R$ be a one dimensional Cohen-Macaulay local ring which is analytically unramified. Consider the two conditions.
    \begin{enumerate}
        \item $R$ has minimal canonical conductor.
        \item If $R \subseteq S \subseteq \overline{R}$ is a Gorenstein birational extension that is reflexive as an $R$-module, then $S = \overline{R}$. 
    \end{enumerate}
    Then, (1) implies (2). If $b(\omega)$ is stable, then (2) implies (1).
\end{theorem}
\begin{proof}
        Assume that $R$ has minimal canonical conductor. Let $S$ be a Gorenstein birational extension of $R$ which is reflexive as an $R$-module. Then, by \cite[Theorem 7.10]{Dao-Maitra-Sridhar}, we know that $\co(R,S)$ is $\omega$-Ulrich. Therefore, $\co(R,S)$ belongs to $\add_R(\co(R))$ and hence $\co(R,S) = \co(R)$ which shows that $S = \overline{R}$.

        On the other hand, assume (2) and that $b(\omega)$ is stable. We know that $B(\omega)$ is an $\omega$-Ulrich module. So, it is a reflexive birational extension of $R$. We know that $\tr(B(\omega))=\co(R, B(\omega)) = b(\omega)$ is $\omega$-Ulrich, as well. Moreover, by our assumption, it is also stable. Thus, by \cite[Theorem 7.10]{Dao-Maitra-Sridhar}, we see that $B(\omega)$ is Gorenstein. Hence, by our assumption, we get that $B(\omega) = \overline{R}$ and therefore, we are done by Theorem \ref{main-theorem}.
\end{proof}
We finish this section with an application to Arf local rings. The following definition and remarks on Arf local rings can be found in Lipman's original work on Arf rings \cite{Lipman}.
\begin{definition}
    Let $R$ be a one dimensional Noetherian semi-local Cohen-Macaulay ring.
    \begin{enumerate}
        \item We define a chain of blowup algebras inductively by letting $R_0 = R$ and $J_0 = J(R)$ (where $J(R)$ denotes the Jacobson radical of $R$) and setting $R_{n+1} := \End_{R_n}(J_n)$ where $J_n = J(R_n)$. If $\m$ is a maximal ideal of some $R_n$, then the local ring $(R_n)_\m$ is said to be \textit{infinitely near} to $R$.
        \item  We say that $R$ is an \textit{Arf ring} if every integrally closed ideal containing a nonzerodivisor has a principal reduction and whenever $x \in R$ is nonzerodivisor and $y,z \in R$ arbitrary elements with $y/x$ and $z/x$ in $\overline{R}$, we must have $yz/x \in R$.
    \end{enumerate}
\end{definition}
\begin{remark}
    The following are equivalent:
\begin{enumerate}
    \item $R$ is an Arf ring,
    \item Every integrally closed ideal containing a nonzerodivisor is stable,
    \item If $B$ is any local ring infinitely near to $R$, then $B$ has minimal multiplicity.
\end{enumerate}
It is clear from this that if $R = R_0$ is Arf, then $R_n$ is Arf for any $n \geq 0$. Moreover, if $R$ is a local ring with minimal multiplicity and $R_1$ is Arf, then $R$ is also Arf.
\end{remark}
Let $R$ be a complete local domain of dimension one and assume that it is analytically unramified. Then, we have a chain 
\begin{align*}
R = R_0 \subseteq R_1 \subseteq R_2 \subseteq \cdots \subseteq R_{n-1} \subseteq R_n = \overline{R}
\end{align*}
consisting of complete local domains which are infinitely near to $R$. By \cite[Theorem A]{Dao}, the $R_i$'s give a complete list of indecomposable reflexive $R$-modules up to isomorphism when $R$ is Arf. Now, assume that $R$ is Arf. If $R \subseteq S \subseteq \overline{R}$ is a birational extension that is reflexive as an $R$-module, then $S = R_i$ for some $i = 1, \ldots, n$. Hence, any birational extension $S \subseteq \overline{R}$ which is reflexive as an $R$-module is Arf. On the other hand, by \cite[Corollary 4.3]{Barucci-Froberg-value-semigroups}, $S$ is Gorenstein and Arf if and only if $S$ has multiplicity 2. This gives us the following characterisation by applying Theorem \ref{new-main-theorem}.
\begin{theorem}
    Let $R$ be a complete local Arf domain and assume that it is analytically unramified. The following are equivalent.
    \begin{enumerate}
        \item $R$ has minimal canonical conductor.
        \item For any birational extension $R\subseteq S \subseteq \overline{R}$ that is reflexive as an $R$-module, we have either $S = \overline{R}$ or $e(S) \geq 3$.
        \item There does not exist an $A_n$-singularity that is infinitely near to $R$.
    \end{enumerate}
\end{theorem}

\section{Numerical semigroups with minimal canonical conductor}
In this section, we classify numerical semigroup rings with minimal canonical conductor. By a numerical semigroup $H$, we mean a subsemigroup of $\mathbb{N}$ with $\mathbb{N}\setminus H$ finite. This means that there exists a largest number $f$ which does not belong to $H$. This $f$ is called the \textit{Frobenius} number of $H$. It has the property that $f + h \in H$ for any $h \in H$ with $h \neq 0$. This property can also hold for other numbers which do not belong to $H$. We put 
\begin{align*}
\mathrm{PF}(H):= \{x \in \mathbb{Z} \setminus H \colon x + h \in H \text{ for all } h \in H \text{ with } h \neq 0 \}
\end{align*}
and call an element of this set a \textit{pseudo-Frobenius} number. The Frobenius number $f$ is the largest pseudo-Frobenius number.

Given a field $k$, we denote by $k[[H]]$ the subring of the polynomial ring $k[[t]]$ generated by $t^h$ such that $h \in H$. There exists a nice interplay between $H$ and $k[[H]]$. For instance, every numerical semigroup has a minimal set of generators. The number of minimal generators of $H$ coincides with the embedding dimension of $k[[H]]$. The smallest element in $H$ coincides with the multiplicity of $k[[H]]$. Hence, $k[[H]]$ is of minimal multiplicity if and only if $H$ is minimally generated by elements of the form $n< a_2< \cdots < a_n$.

We can also see the type of $R = k[[H]]$ from the semigroup data. It is given by the size of $\mathrm{PF}(H)$. More precisely, if $f$ is the Frobenius number of $H$, then the $R$-module 
\begin{align*}
\omega = \sum_{x \in \mathrm{PF}(H)} Rt^{f-x}
\end{align*}
is a canonical module which is fractional ideal sitting between $R$ and $k[[t]] = \overline{R}$ \cite[Korollar 5]{Jager}.

\begin{theorem}\label{pseudo-frobenius-theorem}
    Let $H$ be a numerical semigroup with Frobenius number $f$. Then, the ring $R = k[[H]]$ has minimal canonical conductor if and only if $f-1$ is a pseudo-Frobenius number for $H$. 
\end{theorem}
\begin{proof}
    If $f-1$ is a pseudo-Frobenius number for $H$, then there exists a canonical fractional ideal $\omega = R + Rt + J$ where $J = R^{t_1} + \ldots + R^{t_s}$ for some $t_1, \ldots, t_s$. Note that $\omega^n \subseteq k[[t]] = \overline{R}$ for any positive integer $n$. On the other hand, we have 
    \begin{align*}
    (R+R^t)^n = R + Rt + Rt^2 + \cdots + Rt^n \subseteq \omega^n.
    \end{align*}
    For large enough $n$ (we can choose $n \leq f$), we have $(R+Rt)^n = k[[t]] = \overline{R}$. Hence, we obtain an isomorphism $\omega^n \cong \overline{R}$.

    If $f-1$ is not a pseudo-Frobenius number, we the canonical fractional ideal is of the form $\omega = R + R^{t_1} + \cdots + R^{t_s}$ for some $t_1, \ldots, t_s \geq 2$. Therefore, we have $t \notin \omega^n$ for any $n \geq 1$. This finishes the proof. 
\end{proof}
\begin{corollary}
    Let $H$ be a numerical semigroup minimally generated by $n< a_2< \cdots < a_n$. Then, the ring $k[[H]]$ has minimal canonical conductor if and only if $a_{n} = a_{n-1} +1$.
\end{corollary}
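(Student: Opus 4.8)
The plan is to reduce the corollary to Theorem \ref{quasi-frobenius-theorem} by showing that, for a numerical semigroup $H$ of minimal multiplicity, the condition ``$f-1 \in \mathrm{QF}(H)$'' is equivalent to the purely arithmetic condition ``$a_n = a_{n-1}+1$'' on the minimal generators $n < a_2 < \cdots < a_n$. First I would recall that minimal multiplicity means the smallest nonzero element of $H$ equals the embedding dimension, i.e. the multiplicity is $n$ and there are exactly $n$ minimal generators, all lying in the interval $[n, 2n-1]$; indeed if two generators were congruent mod $n$ the larger one would be redundant, so the $n$ generators $n, a_2, \ldots, a_n$ represent all $n$ residue classes mod $n$, and being minimal generators forces $a_i < 2n$ for all $i$. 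In particular the generators are precisely $n$ together with one element from each nonzero residue class mod $n$, each chosen in $[n+1, 2n-1]$.

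The key computation is to identify the Frobenius number and the Apéry set. With the Apéry set $\mathrm{Ap}(H,n) = \{0 = w_0, w_1, \ldots, w_{n-1}\}$ where $w_i$ is the least element of $H$ congruent to $i$ mod $n$, minimal multiplicity gives $w_i = a_{\sigma(i)}$ for a suitable relabeling, so $\mathrm{Ap}(H,n) = \{0, a_2, \ldots, a_n\}$ as a set. Then by Selmer's formula $f = \max \mathrm{Ap}(H,n) - n = a_n - n$. The set of quasi-Frobenius numbers (equivalently the ``pseudo-Frobenius'' numbers, elements $x \notin H$ with $x+h \in H$ for all $h \in H \setminus \{0\}$) can be read off the Apéry set: $x$ is quasi-Frobenius iff $x \notin H$ and $x + n \in H$ and $x + a_i \in H$ for all $i$; in the minimal multiplicity case this is governed by the partial order on $\mathrm{Ap}(H,n)$ where $w \preceq w'$ iff $w' - w \in H$, and the quasi-Frobenius numbers are exactly $\{w - n : w \text{ maximal in } \mathrm{Ap}(H,n) \text{ under } \preceq\}$. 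So $f - 1 = a_n - n - 1 \in \mathrm{QF}(H)$ iff $a_n - 1 \in \mathrm{Ap}(H,n)$ and $a_n - 1$ is maximal under $\preceq$.

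Now I would argue the equivalence directly. If $a_n = a_{n-1} + 1$, then $a_n - 1 = a_{n-1} \in \mathrm{Ap}(H,n)$; and $a_{n-1}$ is $\preceq$-maximal because the only element of $\mathrm{Ap}(H,n)$ that could dominate it is $a_n$, but $a_n - a_{n-1} = 1 \notin H$. Hence $f - 1 \in \mathrm{QF}(H)$ and Theorem \ref{quasi-frobenius-theorem} applies. Conversely, if $a_n > a_{n-1} + 1$, then $a_n - 1 \notin \{a_2, \ldots, a_n\}$ (it is strictly between $a_{n-1}$ and $a_n$ and the generators in $[n+1,2n-1]$ are distinct), and $a_n - 1$ lies in $[n, 2n-2]$ so it is not $0$ and not a sum of two nonzero elements of $H$ (all of which are $\geq 2n$ except the generators themselves, which are at most $2n-1$); therefore $a_n - 1 \notin H$, and moreover $a_n-1 \notin \mathrm{Ap}(H,n)$, so $(a_n-1) + n \notin H$ would need checking --- more cleanly, $f-1 = a_n - n - 1$ is quasi-Frobenius iff $(f-1) + h \in H$ for all nonzero $h \in H$; taking $h = n$ gives $a_n - 1 \in H$, which we just saw fails. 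Thus $f-1 \notin \mathrm{QF}(H)$.

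The main obstacle I anticipate is the bookkeeping around the Apéry set in the minimal multiplicity case: one must be careful that the minimal generators really are exactly the nonzero Apéry elements and that each sits in $[n+1, 2n-1]$, and that no generator equals $a_n - 1$ when $a_n > a_{n-1}+1$. Once these structural facts are pinned down the equivalence is a short interval-arithmetic argument, and the corollary follows immediately by combining it with Theorem \ref{quasi-frobenius-theorem}.
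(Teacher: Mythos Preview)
Your overall strategy is the same as the paper's: reduce to Theorem~\ref{quasi-frobenius-theorem} by computing $\mathrm{QF}(H)$ via the Ap\'ery set, using that in the maximal embedding dimension case $\mathrm{Ap}(H,n)=\{0,a_2,\ldots,a_n\}$. That identification is correct, and so is your key reformulation that $f-1\in\mathrm{QF}(H)$ if and only if $a_n-1$ is a $\preceq$-maximal element of $\mathrm{Ap}(H,n)$. The paper simply quotes the fact that $\mathrm{QF}(H)=\{a_2-n,\ldots,a_n-n\}$ and reads off the conclusion; your route is the same computation unpacked.

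There is, however, a genuine error in your bookkeeping. The claim that ``being minimal generators forces $a_i<2n$'' is false: take $H=\langle 2,5\rangle$, which has maximal embedding dimension but $a_2=5>2n-1=3$. Consequently your converse argument breaks: you assert $a_n-1\notin H$ because it lies in $[n,2n-2]$ and hence cannot be a sum of two nonzero elements, but in $\langle 2,5\rangle$ one has $a_n-1=4=2+2\in H$. So the step ``taking $h=n$ gives $a_n-1\in H$, which we just saw fails'' does not go through.

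The fix is already contained in your own setup and is exactly what the paper does. You have established $\mathrm{Ap}(H,n)=\{0,a_2,\ldots,a_n\}$ and that $f-1\in\mathrm{QF}(H)$ iff $a_n-1$ is a $\preceq$-maximal element of $\mathrm{Ap}(H,n)$. When $a_n>a_{n-1}+1$, the integer $a_n-1$ lies strictly between $a_{n-1}$ and $a_n$, hence is not in $\{0,a_2,\ldots,a_n\}$ at all; in particular it is not a maximal Ap\'ery element, so $f-1\notin\mathrm{QF}(H)$. No statement about whether $a_n-1$ belongs to $H$ is needed. Drop the interval claim and the detour through ``$a_n-1\notin H$'', and your proof is correct and coincides with the paper's.
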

\begin{proof}
    Since the minimal number of generators equals the smallest generator, $k[[H]]$ is of minimal multiplicity. Therefore, pseudo-Frobenius numbers of $H$ are given by $a_2 - n, a_3-n, \ldots, a_n - n$. Hence, the Frobenius number is $a_n-n$ and $a_n-n-1$ is a Frobenius number if and only if $a_{n-1} = a_n - 1$.   
\end{proof}
\begin{example}
    Let $N$ be the semigroup generated by $n, n+1, n^2-n-1$ with $n \geq 3$ and $R = k[[N]]$ be the local semigroup ring generated by $N$. This ring was studied in \cite[Example 3.13]{Kumashiro}. It has a canonical fractional ideal $\omega = R + Rt$, it is nearly Gorenstein for all $n \geq 3$ and it is almost Gorenstein if and only if $n = 3$. We see from Theorem \ref{pseudo-frobenius-theorem} that 
    \begin{align*}
        \co(R) = b(\omega) \subseteq \tr(\omega) = \m.
    \end{align*}
    Therefore, $R$ has minimal canonical conductor. The inclusion $b(\omega) \subseteq \tr(\omega)$ is strict if and only if $n > 3$. Hence this ring, while being nearly Gorenstein, is far from being almost Gorenstein for $n > 3$. 
\end{example}

\bibliographystyle{alpha}
\bibliography{mcc.bib}
\end{document}